\numberwithin{equation}{section}
\DeclareMathOperator*{\Span}{span \;}
\def\re{\mathop{\rm Re}\nolimits}
\def\ker{\mathop{\rm ker}\nolimits}
\theoremstyle{plain}
\newtheorem{theorem}{Theorem}[section]
\newtheorem{thm}[theorem]{Theorem}
\newtheorem{thm*}{Theorem}
\newtheorem{cor}[theorem]{Corollary}
\newtheorem{prop}[theorem]{Proposition}
\theoremstyle{definition}
\newtheorem{ex}[theorem]{Example}
\theoremstyle{remark}
\newtheorem{rem}[theorem]{Remark}
\def\CC{\mathbb C}
\def\DD{\mathbb D}
\def\NN{\mathbb N}
\def\RR{\mathbb R}
\def\TT{\mathbb T}
\def\LL{{\mathcal L}}
\def\FF{\mathcal F}
\def\beginpf{\begin{proof}}
\def\endpf{\end{proof}}
\def\beq{\begin{equation}}
\def\eeq{\end{equation}}
\newcommand{\xdownarrow}[1]{%
  {\left\downarrow\vbox to #1{}\right.\kern-\nulldelimiterspace}
}
\begin{document}

\title[Ces\`aro operator]{Insights on the Ces\`aro operator: shift semigroups \newline and invariant subspaces}

\author{Eva A. Gallardo-Guti\'{e}rrez}
\address{Eva A. Gallardo-Guti\'errez \newline
Departamento de An\'alisis Matem\'atico y Matem\'atica Aplicada,\newline
Facultad de Matem\'aticas,
\newline Universidad Complutense de
Madrid, \newline
Plaza de Ciencias 3, 28040 Madrid,  Spain
\newline
and Instituto de Ciencias Matem\'aticas ICMAT,
\newline Madrid,  Spain }
\email{eva.gallardo@mat.ucm.es}

\author{Jonathan R. Partington}
\address{Jonathan R. Partington, \newline
School of Mathematics, \newline
University of Leeds, \newline
Leeds LS2 9JT, United Kingdom}
\email{J.R.Partington@leeds.ac.uk}

\thanks{Both authors are partially supported by Plan Nacional  I+D grant no. PID2019-105979GB-I00, Spain. The first author is also supported by
 the Spanish Ministry of Science and Innovation, through the ``Severo Ochoa Programme for Centres of Excellence in R\&D'' (CEX2019-000904-S) and from the Spanish National Research Council, through the ``Ayuda extraordinaria a Centros de Excelencia Severo Ochoa'' (20205CEX001). }

\subjclass[2010]{Primary 47A15, 47A55, 47B15}

\date{June 2022,  revised version September 24th, 2022}

\keywords{Ces\`aro operator, composition operator, shift semigroup, invariant subspaces, functional calculus}


\begin{abstract}
A closed subspace is invariant under the Ces\`aro operator $\mathcal{C}$ on the classical Hardy space $H^2(\DD)$ if and only if its orthogonal complement is invariant under the $C_0$-semigroup of composition operators induced by the affine maps $\varphi_t(z)= e^{-t}z + 1 - e^{-t}$ for $t\geq 0$ and $z\in \DD$. The corresponding result also holds in the Hardy spaces $H^p(\DD)$ for $1<p<\infty$. Moreover, in the Hilbert space setting, by linking the invariant subspaces of $\mathcal{C}$ to the lattice of the closed invariant subspaces of the standard right-shift semigroup acting on a particular weighted $L^2$-space on the line, we exhibit a large class of non-trivial closed invariant subspaces and provide a complete characterization of the finite codimensional ones, establishing, in particular, the limits of such an approach towards describing the lattice of all invariant subspaces of $\mathcal{C}$. Finally, we present a functional calculus argument which allows us to extend a recent result by Mashreghi, Ptak and Ross regarding the square root of $\mathcal{C}$ and discuss its invariant subspaces.
\end{abstract}


\maketitle

\section{Introduction and preliminaries}
Despite the fact that one of the most classical transformations of sequences is the Ces\`aro operator $\mathcal{C}$, there are still many questions about it unsettled.  Recall that $\mathcal{C}$ takes a complex sequence $\textbf{a}=(a_0, a_1, a_2\dots )$ to that with $n$-th entry:
$$
(\mathcal{C}\, \textbf{a})_n= \frac{1}{n+1} \sum_{k=0}^n a_k, \qquad (n\geq 0).
$$
Upon identifying sequences with Taylor coefﬁcients of power series, $\mathcal{C}$ acts \emph{formally} on  $f(z)=\sum_{k=0}^{\infty} a_k z^k$ as
\begin{equation}\label{definition Cesaro}
\mathcal{C}(f)(z)=\sum_{n=0}^\infty \left (\frac{1}{n+1} \sum_{k=0}^n a_k \right ) z^n.
\end{equation}
Indeed, if $f$ is a holomorphic function on the unit disc $\mathbb{D}$ so is  $\mathcal{C}(f)$ and moreover, $\mathcal{C}$ is an isomorphism of the Fr\'echet space $\mathcal{H}(\DD)$ of all holomorphic functions on $\DD$ endowed with the topology of uniform convergence on compacta.

\smallskip

Nevertheless, this is no longer true when $\mathcal{C}$ is restricted to the classical Hardy spaces $H^p(\DD)$, $1\leq p <\infty$. A classical result of Hardy concerning trigonometric series along with M. Riesz’s theorem yields that $\mathcal{C}$ is bounded on $H^p(\DD)$ for $1<p <\infty$. Likewise, Siskakis proved that $\mathcal{C}$ is bounded on $H^1(\DD)$ (providing even an alternative proof of the boundedness on $H^p(\DD)$ for $1<p <\infty$; see \cite{siskakis}, \cite{siskakis2}). However 0 belongs to the spectrum of $\mathcal{C}$ in $H^p(\DD)$ and hence, $\mathcal{C}$ is not an isomorphism \cite{siskakis}.

\smallskip

Note that \eqref{definition Cesaro} can be written as
$$
\mathcal{C}(f)(z)= \left \{ \begin{array}{ll}
\displaystyle \frac{1}{z} \int_0^z \frac{f(\xi)}{1-\xi} \, d\xi, & z\in \DD\setminus\{ 0\}, \\
\noalign{\medskip}
f(0)&  z=0;
\end{array} \right.
$$
for $z\in \DD$. There is an extensive literature on the Ces\`aro operator, and more general, on integral operators, acting on a large variety of spaces of analytic functions regarding its boundedness, compactness or spectral picture (see \cite{Ale06} or \cite{AlSi}, for instance).

\smallskip

If we restrict ourselves to the Hilbert space case $H^2(\DD)$, Kriete and Trutt proved the striking result that the Ces\`aro operator is \emph{subnormal}, namely, $\mathcal{C}$ on $H^2(\DD)$ has a normal extension. More precisely, if $I$ denotes the identity operator on $H^2(\DD)$, they proved that $I-\mathcal{C}$ is unitarily equivalent to the operator of multiplication by the identity function acting on the closure of analytic polynomials on the space $L^2(\mu, \DD)$ for a particular measure $\mu$ (see \cite{KT71}). An alternative proof of the Kriete and Trutt theorem, based on the connection between $\mathcal{C}$ and composition operators semigroups, was later established by Cowen \cite{Co}.

\smallskip

For $H^p(\DD)$, $1<p<\infty$, Miller, Miller and Smith \cite{MMS} showed that $\mathcal{C}$ is \emph{subdecomposable}, namely, it has a decomposable extension (the  $H^1(\DD)$ case was proved by Persson \cite{Per} ten years later). Decomposable operators were introduced by Foia\c{s} \cite{FOIAS} in the sixties as a generalization of spectral operators in the sense of Dunford, and many spectral operators in Hilbert spaces as unitary operators, self-adjoint operators or more generally, normal operators are decomposable (see the monograph \cite{LN00} for more on the subject).

\smallskip

Normal operators on Hilbert spaces or more generally, decomposable operators on Banach spaces have a rich lattice of non-trivial closed invariant subspaces with a significant description of them. But, very little is known about this description even for concrete examples of subnormal operators as the Ces\`aro operator, and this will be the main motivation of the present manuscript.

\smallskip

In this context, we discuss invariant subspaces of the Ces\`aro operator $\mathcal{C}$ on the Hardy space $H^2(\DD)$. Broadly speaking, we prove a   \emph{Beurling--Lax Theorem} for the Ces\`aro operator and provide a complete characterization of the finite codimensional invariant subspaces of $\mathcal{C}$.
The composition semigroup method has turned out to be a powerful tool to study the Ces\`aro operator and we will make use of such technique in Section \ref{section 2} to link the invariant subspaces of $\mathcal{C}$ to those of the right-shift semigroup $\{S_{\tau}\}_{\tau\geq 0}$ acting on a particular weighted $L^2(\mathbb{R}, w(y)dy)$.  In particular, we will establish the limits of our approach towards describing completely the lattice of the invariant subspaces of $\mathcal{C}$.

\smallskip

In Section \ref{section 3}, we discuss Phillips functional calculus (as in Haase’s book \cite{haase}) which will allow us, in particular, generalize the recent work  by Mashreghi, Ptak and Ross \cite{MPR} regarding the square roots of $\mathcal{C}$. In particular, we will discuss their invariant subspaces.

\smallskip

In order to close this introductory section we collect some preliminaries for the sake of completeness.

\subsection{Semigroups of composition operators}
The study of semigroups of composition operators on various function spaces of analytic functions has its origins in the work of Berkson and Porta \cite{Ber-Por}, where they characterize their generators on $H^p(\DD)$, proving, indeed, that these semigroups are always strongly continuous.

Recall that a one-parameter family $\Phi=\{\varphi_t\}_{t\ge 0}$ of analytic self-maps of $\mathbb{D}$ is called
a \textit{holomorphic flow} (or \textit{holomorphic semiflow} by some authors) if it is a continuous
family that has a semigroup property with respect to composition, namely
\begin{enumerate}
\item[1)]
$\varphi_0(z)=z$, for $z\in \DD$;

\item[2)] $\varphi_{t+s}(z)=\varphi_{t}\circ \varphi_{s}(z)$, for $t,s\ge0$, and $z\in \DD$;

\item[3)] For any $s\ge0$ and any $z\in\DD$,
$\lim_{t\to s}\varphi_{t}(z)=\varphi_{s}(z)$.
\end{enumerate}
The holomorphic flow  $\Phi$ is trivial if $\varphi_t(z) =z$ for all $t\geq 0$. Otherwise, we say that $\Phi$ is nontrivial. We refer to the recent monograph \cite{BCD} for more on the subject.

\smallskip

Associated to the holomorphic flow $\Phi=\{\varphi_t\}_{t\ge 0}$ is the family of composition operators $\{C_{\varphi_t} \}_{t\geq 0}$, defined
on the space of analytic functions on $\mathbb{D}$ by
$$
C_{\varphi_t} f= f\circ \varphi_t.
$$
Clearly, $\{C_{\varphi_t} \}_{t\geq 0}$ has the semigroup property:
\begin{enumerate}
\item[1)] $C_{\varphi_0}=I$;

\item[2)]$C_{\varphi_t} C_{\varphi_s}=C_{\varphi_{t+s}}$ for all $t,s\ge 0$.
\end{enumerate}
Moreover, recall that if an operator  semigroup $\{T_t\}_{t\geq 0}$ acts on a Banach space
$X $, then it is called \emph{strongly continuous} or \emph{$C_0$-semigroup}, if it satisfies
$$
\lim_{t\to 0^+} T_t f=f
$$
for any $f\in X $. Given a $C_0$-semigroup $\{T_t\}_{t\geq0}$ on a Banach space $X $, recall that its generator is the  closed and
densely defined linear operator $A$ defined by
\[
Af=\lim_{t\to 0^+} \frac {T_t f-f} t
\]
with domain $\mathcal{D}(A)=\{f\in X : \lim_{t\to 0^+} \frac {T_t f-f} t \enspace \text{exists}\}$.

\smallskip

\section{The lattice of the invariant subspaces of the Ces\`aro operator}\label{section 2}

The aim of this section is identifying the lattice of the invariant subspaces of the Ces\`aro operator $\mathcal{C}$ acting on the Hardy space $H^2(\DD)$. In particular, we will characterize the finite codimensional invariant subspaces of $\mathcal{C}$.

\smallskip

Our first result resembles a \emph{Beurling--Lax Theorem} for the Ces\`aro operator.

\smallskip

\begin{thm}\label{thm semigroup}
Let $\Phi=\{\varphi_t\}_{t\ge 0}$ be the holomorphic flow given by
\begin{equation}\label{semigroup C}
\varphi_t(z)= e^{-t}z + 1 - e^{-t}, \qquad (z\in \DD).
\end{equation}
A closed subspace $M$ in $H^2(\DD)$ is invariant under the Ces\`aro operator if and only if its orthogonal complement $M^{\perp}$ is invariant under the semigroup of composition operators induced by $\Phi$, namely, $\{C_{\varphi_t} \}_{t\geq 0}$.
\end{thm}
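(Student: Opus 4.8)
The plan is to pass to adjoints and then identify $\mathcal{C}^*$ with a resolvent of the generator of the semigroup. Since a closed subspace $M$ is invariant under $\mathcal{C}$ if and only if $M^\perp$ is invariant under $\mathcal{C}^*$, it suffices to prove that a closed subspace $N=M^\perp$ is invariant under $\mathcal{C}^*$ if and only if it is invariant under every $C_{\varphi_t}$. The first step is to make Cowen's connection explicit. The flow \eqref{semigroup C} solves $\partial_t\varphi_t=G\circ\varphi_t$ with infinitesimal generator $G(z)=1-z$, so the $C_0$-semigroup $\{C_{\varphi_t}\}_{t\ge0}$ (strongly continuous by Berkson--Porta) has generator $Af=(1-z)f'$. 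A direct change of variables $u=e^{-t}$ gives, for $f\in H^2(\DD)$,
\begin{equation*}
\int_0^\infty e^{-t}\,(C_{\varphi_t}f)(z)\,dt=\frac{1}{1-z}\int_z^1 f(w)\,dw,
\end{equation*}
and evaluating both sides on the monomials $z^n$ (the right-hand side yields $\tfrac{1}{n+1}\sum_{k=0}^n z^k$) shows that this operator is precisely $\mathcal{C}^*$. Hence $\mathcal{C}^*=\int_0^\infty e^{-t}C_{\varphi_t}\,dt=(I-A)^{-1}$, the resolvent $R(1):=(I-A)^{-1}$ of $A$ at $\lambda=1$. The integral converges because $\|C_{\varphi_t}\|_{H^2}$ grows only like $e^{t/2}$, so the growth bound $\omega$ of the semigroup satisfies $\omega\le 1/2<1$ and the half-plane $\{\re\lambda>1/2\}$ lies in the resolvent set.

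For the direction ``$\{C_{\varphi_t}\}$-invariant $\Rightarrow$ $\mathcal{C}^*$-invariant'' I would read $\mathcal{C}^*f=\int_0^\infty e^{-t}C_{\varphi_t}f\,dt$ as a Bochner integral: the integrand $t\mapsto e^{-t}C_{\varphi_t}f$ is continuous and dominated by $Me^{-t/2}\|f\|$, hence integrable. If $f\in N$ and $N$ is closed and $C_{\varphi_t}$-invariant, the integrand takes values in $N$, and a closed subspace is preserved by Bochner integration (being an intersection of kernels of bounded functionals); thus $\mathcal{C}^*f\in N$.

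The converse is the main obstacle, since one must recover the whole semigroup from the single resolvent $R(1)=\mathcal{C}^*$. I would first show that $N$ is $R(\lambda)$-invariant throughout $\{\re\lambda>1/2\}$. The set $\{\lambda:R(\lambda)N\subseteq N\}$ is closed in this half-plane by continuity of $\lambda\mapsto R(\lambda)$ together with closedness of $N$, and it is open because of the local expansion
\begin{equation*}
R(\lambda)=\sum_{k=0}^\infty(\mu-\lambda)^k R(\mu)^{k+1},\qquad |\mu-\lambda|<\|R(\mu)\|^{-1},
\end{equation*}
which presents $R(\lambda)$ as a norm-limit of polynomials in $R(\mu)$; since $N$ is invariant under $R(\mu)$ it is invariant under each such polynomial and hence under the limit. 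As the half-plane is connected and contains $\lambda=1$, this clopen set is everything. Finally I would invoke the Euler/Post--Widder exponential formula
\begin{equation*}
C_{\varphi_t}f=e^{tA}f=\lim_{n\to\infty}\Big(\tfrac{n}{t}\,R\big(\tfrac{n}{t}\big)\Big)^{n}f,
\end{equation*}
valid for $C_0$-semigroups: for $n>t/2$ the point $n/t$ lies in the half-plane above, each $R(n/t)^n$ preserves the closed subspace $N$, and therefore so does the limit. Hence $C_{\varphi_t}f\in N$, completing the argument. The delicate points to watch are the convergence of the defining integral (guaranteed by $\omega<1$, so that $\lambda=1$ sits comfortably in the resolvent set) and the legitimacy of the inversion formula, which is exactly where the $C_0$-semigroup structure is essential.
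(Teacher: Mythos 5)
Your argument is correct, and it reaches the theorem by a genuinely different route from the paper. Both proofs pivot on the same key identity, namely that $\mathcal{C}^*$ is the resolvent of the generator $A f=(1-z)f'$ at $\lambda=1$ (your monomial computation $\int_0^\infty e^{-t}C_{\varphi_t}z^n\,dt=\frac{1}{n+1}\sum_{k=0}^n z^k$ is exactly right, and you get $1\in\rho(A)$ for free from the growth bound $\|C_{\varphi_t}\|=e^{t/2}$, whereas the paper proves bijectivity of $A-I$ by hand, exhibiting the explicit inverse $g\mapsto \frac{1}{z-1}\int_z^1 g$ and recognizing it as $-\mathcal{C}^*$). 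Where you diverge is in how the equivalence of invariant subspaces is extracted from this identity: the paper renormalizes to the contraction semigroup $\{e^{-t}C_{\varphi_{2t}}\}_{t\ge 0}$ with generator $2A-I$, observes that its cogenerator $V$ satisfies $V-I=-2\mathcal{C}^*$, and invokes the Sz.-Nagy--Foias/Fuhrmann theorem that a contractive $C_0$-semigroup and its cogenerator have the same invariant subspaces. You instead give a self-contained two-step argument: Bochner integration of $N$-valued functions for the direction ``semigroup-invariant $\Rightarrow$ $\mathcal{C}^*$-invariant,'' and, for the converse, propagation of resolvent invariance across the connected half-plane $\{\re\lambda>1/2\}$ via the Neumann series, followed by the Euler/Post--Widder inversion formula $T(t)f=\lim_n\bigl(\tfrac{n}{t}R(\tfrac{n}{t})\bigr)^n f$. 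Your approach buys independence from the cogenerator machinery and from any contractivity normalization (so it transparently generalizes to settings where the cogenerator theorem is unavailable, e.g.\ the $H^p$ case the paper treats by duality); the paper's approach is shorter once the cited theorem is granted. All the delicate points you flag (convergence of the Laplace integral, density plus boundedness to upgrade the monomial identity, closedness of $N$ under norm and strong limits) are handled correctly.
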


Before proceeding with the proof, note that each $\varphi_t$ in \eqref{semigroup C} is a affine map which is a hyperbolic non-automorphism of the unit disc
inducing  a bounded composition operator $C_{\varphi_t}$ on $H^2(\DD)$ with norm
\begin{equation}\label{norm estimate}
\|C_{\varphi_t}\|_2= e^{\frac{t}{2}},
\end{equation}
(see, for instance, \cite[Theorem 9.4]{CMc}).

\noindent Likewise,  the generator of the $C_0$-semigroup $\{C_{\varphi_t} \}_{t\geq 0}$ is given by
\[
Af(z)=(1-z)f'(z), \qquad (z\in \DD),
\]
(see the pioneering work by Berkson and Porta \cite{Ber-Por}, for instance).
\medskip

\begin{proof}
First, let us show that the cogenerator of the $C_0$-semigroup $\{C_{\varphi_t} \}_{t\geq 0}$ given by
$$V=(A+I)(A-I)^{-1}$$
is a well-defined bounded operator. For such a task, we will prove that $1 \in \rho(A)$, the resolvent of $A$, or equivalently,
$$A-I: \mathcal{D}(A) \subset H^2(\DD) \to H^2(\DD)$$
is bijective.

\smallskip

Note that $A-I$ is an injective operator in $\mathcal{D}(A) \subset H^2(\DD)$. Indeed, if $(A-I)f(z)=0$ then $(1-z)f'(z)-f(z)=0$, so $f(z)=C/(1-z)$ for some complex constant $C \in \CC$. But for $C \ne 0$ we have that $f \not\in H^2(\DD)$.

\smallskip

We claim that $A-I$ is also surjective. Given $g \in H^2(\DD)$,  in order to find $f \in H^2(\DD)$ such that
\[
(A-I)f(z)=g(z) \qquad (z \in \DD),
\]
or,
\[
(1-z)f'(z)-f(z)=((1-z)f(z))'= g(z) \qquad (z \in \DD),
\]
let
\begin{equation}\label{def f}
f(z)=\frac{1}{1-z}\int_1^z g(u) \, du = \frac{1}{z-1} \int_z^1 g(u) \, du
\end{equation}
for $z \in \DD$.

\smallskip

Note that the adjoint of the Ces\`aro operator $\mathcal{C}^*$ has the following matrix with respect to the canonical
orthonormal basis of $H^2(\DD)$:
\[
\begin{pmatrix}
1 & \frac12 & \frac13  & \frac14 & \ldots\\
0 & \frac12 & \frac13 & \frac14 & \ldots\\
0 &  0 & \frac13 & \frac14 & \ldots\\
0 & 0 & 0 & \frac14 & \ldots \\
\vdots & \vdots & \vdots & \vdots & \ddots
\end{pmatrix}
\]

Writing
\[
Tg(z)= \frac{1}{z-1} \int_z^1 g(u) \, du,
\]
for $g\in H^2(\DD)$, observe that
\[
T z^n = \frac{1}{z-1} \left( \frac{1-z^{n+1}}{n+1} \right) = - \frac{1+z+\ldots+z^n}{n+1}.
\]
Accordingly, $T$ is a well-defined operator in $H^2(\DD)$ (as it is -$\mathcal{C}^*$). This in particular implies that the function $f$ in \eqref{def f} belongs to $H^2(\DD)$ and  hence $A-I$ is surjective.

\smallskip

Accordingly, the cogenerator $V$ of the $C_0$-semigroup $\{C_{\varphi_t} \}_{t\geq 0}$ is a well-defined bounded operator on $H^2(\DD)$.

\smallskip

Now, having in mind the norm estimate \eqref{norm estimate}, we observe that the $C_0$-semigroup
$\{e^{-t} C_{\varphi_{2t}} \}_{t\geq 0}$ is contractive on $H^2(\DD)$ and its generator is $2A-I$. Since
$$V-I = 2(A-I)^{-1} = {4} ((2A-I)-I)^{-1} = -2\mathcal{C}^*$$
the invariant subspaces of the cogenerator are simply the common invariant
subspaces of the semigroup (see \cite[Chap. 10, Theorem 10.9]{fuhrmann}) and
the statement of the theorem  follows.
\end{proof}

\smallskip

First, let us remark that a similar argument in the context of $C_0$-semigroups of analytic 2-isometries was also used in \cite{GP18}. Likewise, recalling that the Hardy space $H^p(\DD)$, $1\leq p<\infty$,
consists of holomorphic functions $f$ on $\mathbb D$ for which the
norm
$$
\|f\|_p=\left ( \sup_{0\leq r<1} \int_{0}^{2\pi}
|f(re^{i\theta})|^p \, \frac{d\theta}{2\pi}\right )^{1/p}
$$
is finite, we note that the previous proof also works in $H^p(\DD)$-spaces ($1<p<\infty$) with the natural identification of the dual space $H^p(\DD)^* \cong H^{p'}(\DD)$ where $p'$ is the conjugate exponent: $\frac{1}{p}+\frac{1}{p'}=1$.
In this case, the bounded composition operator $C_{\varphi_t}$ on $H^p(\DD)$ has norm
\begin{equation}\label{norm estimate 2}
\|C_{\varphi_t}\|_p= e^{\frac{t}{p}}
\end{equation}
for $1\leq p<\infty$ (see \cite[Exercise 3.12.5, p. 56-57]{nik1}
and \cite[Part Two, Ch. 1, Section 5 1. B p.~165-166]{HaJo}, for instance). Accordingly, the $C_0$-semigroup {$\{e^{-(p'-1)t} C_{\varphi_{p't}} \}_{t\geq 0}$ is also contractive on $H^p(\DD)$ with generator $p' A-(p'-1)I$ and cogenerator $I+2(p'A-(p'-1)I-I)^{-1}=I+2(p'(A-I))^{-1}$}. Therefore, a closed subspace $M$ in $H^p(\DD)$  for $1<p<\infty$ is invariant under the Ces\`aro operator if and only if its annihilator $M^{\perp}$ in $H^{p'}(\DD)$ is invariant under the  $C_0$-semigroup $\{C_{\varphi_t} \}_{t\geq 0}$.

\smallskip

\noindent In this regard, it is worth noting that $(1-z)^{-1} \not \in H^p(\DD)$ for any $1\leq p\leq \infty$ since $(1-e^{i\theta})^{-1} \not \in L^p(\TT)$.

\begin{rem}
By Dunford and Schwartz \cite[Thm.~11, p.~622]{DS}, the resolvent can be expressed in terms of the Laplace transform of the semigroup; that is,
\[
(A-I)^{-1} f(z)= \int_0^\infty e^{-t} C_{\varphi_t}f(z) \, dt, \qquad (z \in \DD).
\]
So
\[
\mathcal{C}^* f(z)= - \int_0^\infty e^{-t} f(e^{-t}z+1-e^{-t}) \, dt, , \qquad (z \in \DD).
\]
\end{rem}

\smallskip

Indeed, a consequence of the previous formula is the following:

\begin{cor}\label{coro adjunto}
A closed subspace $M$ in $H^2(\DD)$ is invariant under the Ces\`aro operator if and only if it is invariant under the semigroup  $\{C_{\varphi_t}^* \}_{t\geq 0}$.
\end{cor}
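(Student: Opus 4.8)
The plan is to deduce this directly from Theorem \ref{thm semigroup} by invoking the elementary duality between invariance under an operator and invariance of the orthogonal complement under its adjoint. Recall that for any bounded operator $T$ on $H^2(\DD)$ and any closed subspace $N$ one has $TN \subseteq N$ if and only if $T^* N^{\perp} \subseteq N^{\perp}$, which is immediate from $\langle Tm, n\rangle = \langle m, T^*n\rangle$. I would apply this with $T = C_{\varphi_t}^*$ and $N = M$, using that $(M^{\perp})^{\perp} = M$ since $M$ is closed, to obtain that $C_{\varphi_t}^* M \subseteq M$ for every $t \geq 0$ if and only if $C_{\varphi_t} M^{\perp} \subseteq M^{\perp}$ for every $t \geq 0$.

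Chaining this equivalence with Theorem \ref{thm semigroup} then closes the argument: the subspace $M$ is invariant under $\mathcal{C}$ if and only if $M^{\perp}$ is invariant under the whole semigroup $\{C_{\varphi_t}\}_{t\geq 0}$, and by the previous observation the latter holds if and only if $M$ is invariant under $\{C_{\varphi_t}^*\}_{t\geq 0}$.

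Alternatively, and in keeping with the framing of the statement as a consequence of the Remark, one implication can be read off the integral formula directly. Taking adjoints in $\mathcal{C}^* f = -\int_0^\infty e^{-t} C_{\varphi_t} f \, dt$ yields $\mathcal{C} f = -\int_0^\infty e^{-t} C_{\varphi_t}^* f \, dt$, where the integral converges as a Bochner integral because the norm estimate \eqref{norm estimate} gives $\|e^{-t} C_{\varphi_t}^*\|_2 = e^{-t/2}$, which is integrable on $(0,\infty)$. If $M$ is a closed subspace invariant under every $C_{\varphi_t}^*$, then for $f \in M$ each integrand $e^{-t} C_{\varphi_t}^* f$ lies in $M$, and since $M$ is closed it contains the integral, so $\mathcal{C} f \in M$; the reverse implication is then supplied by the duality argument above.

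I do not anticipate a genuine obstacle here: the duality principle is a standard one-line fact and, combined with Theorem \ref{thm semigroup}, settles both directions at once. The only point deserving a word of care is the integral route, where one must justify the convergence of the Bochner integral and the passage of the integral inside the closed subspace $M$ — both handled cleanly by the exponential norm bound \eqref{norm estimate}.
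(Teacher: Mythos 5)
Your proposal is correct and takes essentially the paper's approach: the paper presents this corollary as an immediate consequence of Theorem \ref{thm semigroup} together with the integral representation $\mathcal{C}^*=-\int_0^\infty e^{-t}C_{\varphi_t}\,dt$ (equivalently, the fact that the adjoint semigroup $\{C_{\varphi_t}^*\}_{t\geq 0}$ is generated by $A^*$), and your main argument --- Theorem \ref{thm semigroup} chained with the elementary duality $TN\subseteq N \Leftrightarrow T^*N^{\perp}\subseteq N^{\perp}$ --- is the same deduction with the details filled in. Your Bochner-integral alternative, justified by the bound $\|e^{-t}C_{\varphi_t}^*\|=e^{-t/2}$, is precisely the route the paper's preceding remark indicates for the forward implication.
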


Alternatively, one may use that the adjoint of the generator is the generator of the
adjoint semigroup in the context of Hilbert spaces (see \cite[Chap. 10]{fuhrmann}, for instance).

\medskip

Finally, note that the adjoint $C_{\varphi_t}^*$ in $H^2(\DD)$ in Corollary \ref{coro adjunto} may be explicitly computed as a weighted composition operator (see \cite[Theorem 9.2]{CMc}). Indeed, expressing $\varphi_t(z)= e^{-t}z + 1 - e^{-t}$ in its normal form, namely $\varphi_t(z)= (a_t z +b_t)/(0 z+ a_t^{-1})$ where
$$a_t=e^{-t/2},\qquad b_t=\frac{1-e^{-t}}{e^{-t/2}}$$
we deduce that $C_{\varphi_t}^*= T_{g_t} C_{\sigma_t} T_{h_t}^*$ where
$$
g_t(z)= \frac{1}{-b_t z+ a_t^{-1}}=\frac{e^{-t/2}}{1-(1-e^{-t})z}, \qquad (z\in \DD),
$$
$$
\sigma_t(z)= \frac{a_tz}{-b_t z+ a_t^{-1}}=\frac{e^{-t}z}{1-(1-e^{-t})z}, \qquad (z\in \DD),
$$
and
$$
h_t(z)= a_t^{-1}= e^{t/2};
$$
and $T_{g_t}$ and $T_{h_t}$ denotes the analytic Toeplitz operators acting on $H^2(\DD)$ induced by the symbols $g_t$ and $h_t$ respectively.

Accordingly,
$$C_{\varphi_t}^* f(z)= \frac{1}{1-(1-e^{-t})z} f\left (\frac{e^{-t}z}{1-(1-e^{-t})z} \right ), \qquad (z\in \DD)$$
for $f\in H^2(\DD)$ and every $t\geq0$.

\medskip

\subsection{Shift semigroups}

In order to provide a characterization of the finite codimensional invariant subspaces of $\mathcal{C}$, we will make use of a semigroup of operators acting on the
Hardy space of the right half-plane $\CC_+$. Recall that the Hardy space $H^2(\CC_+)$ consists of the functions $F$ analytic
on $\CC_+$ with finite norm
$$
\Vert F \Vert_{H^2 (\CC_+)}  = \left\{\sup_{0<x<\infty}
\int_{-\infty}^{\infty} \vert F (x+iy) \vert ^2\, dy\right\}^{1/2}.
$$
The classical Paley--Wiener Theorem (see \cite{Ru}, for instance) states that $H^2(\CC_+)$ is isomorphic under the Laplace transform  to $L^2(\RR_+)$, the space of measurable functions square-integrable over $(0,\infty)$. More precisely,  to each  function $F \in H^2(\CC_+)$ there
corresponds a function  $f \in L^2(\mathbb{R}_+)$ such that
$$
F(s)= (\LL f)(s):= \int_0^\infty f(x) e^{-sx}\, dx\,, \qquad (s \in \CC_+),
$$
and
$$
\Vert F\Vert^2 _{H^2 (\CC_+)}= 2\pi \int_0^\infty \vert f(x) \vert ^2 \,dx\,.
$$

\smallskip

A first observation already stated in \cite[Lemma 4.2]{CN} is that each
$\varphi_t(z)= e^{-t}z + 1 - e^{-t}$  for $z\in \DD$ and $t>0$  induces a composition operator in $H^2(\DD)$ which is similar under an isomorphism $\mathcal{U}$ (indeed unitarily equivalent up to a constant) to $e^t \, C_{\phi_t}$ in $H^2(\CC_+)$, where
$$
\phi_t(s)=e^t s+ (e^t-1), \qquad  (s\in \CC_+).
$$
Namely,
\begin{equation}\label{similarity half-plane}
\mathcal{U} C_{\varphi_t} \mathcal{U}^{-1}= e^t C_{\phi_t}, \qquad (t\geq 0).
\end{equation}

\smallskip

\noindent Since we are interested in studying invariant subspaces, either for the entire semigroup or individual elements, we may
disregard  factors of the form $e^{\lambda t}$ for a fixed $\lambda \in \RR$.

\medskip

\noindent By means of the inverse Laplace transform we are led to consider the semigroup on $L^2(\RR_+)$
\begin{equation}\label{semigroup in half-line}
V_tg(x)= e^{-t}e^{-(1-e^{-t})x} g(e^{-t} x), \qquad (x>0, \ t \ge 0).
\end{equation}

\medskip

Now, proceeding as in \cite{GP09}, we may find a further equivalence with an operator on $L^2(\RR)$
using the unitary mapping $T: L^2(\RR) \to L^2(\RR_+)$ defined by
$$Th(x) = x^{-1/2}h(\log x), \qquad (x>0)$$
and
$$T^{-1}g(y)=e^{y/2} g(e^y), \qquad (y\in \RR).$$

\medskip

\noindent Accordingly,
\begin{equation}\label{eq:v2}
T^{-1}V_tT h(y)= e^{-t/2}e^{-(1-e^{-t})e^y}h(y-t),\qquad (y\in \RR),
\end{equation}
for $t\geq 0$.

Denoting by $\{S_t:\, t\geq 0\}$ the right-shift semigroup on $L^2(\RR)$:
$$
S_t f(y)=f(y-t), \qquad (y\in \RR),
$$
and recalling that if $w$ denotes a positive measurable function in $\RR$ the space $L^2(\RR, w(y)dy)$ consists of measurable functions in $\RR$ square-integrable respect to the measure $w(y)dy$, a key observation is the following

\begin{prop}\label{proposition right-shift}
The semigroup $\{\sigma_t:\, t\geq 0\}$ in $L^2(\RR)$ given by
\begin{equation}\label{sigma semigroup}
\sigma_t h(y)=e^{-(1-e^{-t})e^y}h(y-t), \qquad (y\in \RR),
\end{equation}
for $h\in L^2(\RR)$ is unitarily equivalent to the right-shift semigroup $\{S_t:\, t\geq 0\}$ acting on the weighted Lebesgue space $L^2(\RR, e^{-2(e^y-1)} \, dy)$.
\end{prop}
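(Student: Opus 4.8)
The plan is to exhibit an explicit unitary operator $U$ on $L^2(\RR)$ that intertwines the semigroup $\{\sigma_t\}$ with the right-shift semigroup $\{S_t\}$, where the target space carries the weight $w(y)=e^{-2(e^y-1)}$. The natural candidate is a multiplication operator $U h(y) = m(y) h(y)$ for some unimodular-up-to-weight factor $m$, since $\sigma_t$ differs from $S_t$ only by the pointwise multiplier $e^{-(1-e^{-t})e^y}$, which depends on both $y$ and $t$. The guiding principle is that multiplying by a suitable positive function should absorb this $t$-dependent exponential factor into the weight and turn $\sigma_t$ into a clean shift.

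First I would determine $m$. Writing out $U \sigma_t U^{-1}$ acting on a function and demanding that it equal $S_t$, one gets the cocycle-type condition $m(y)\, e^{-(1-e^{-t})e^y}\, m(y-t)^{-1} = 1$ for all $t$ and $y$; equivalently $m(y)/m(y-t) = e^{(1-e^{-t})e^y}$. The exponent expands as $e^y - e^{y-t}$, so the condition reads $\log m(y) - \log m(y-t) = e^y - e^{y-t}$, which is solved by $\log m(y) = e^y + \text{const}$, i.e. $m(y) = e^{e^y}$ up to a multiplicative constant. Thus $U h(y) = e^{e^y} h(y)$ is the intertwiner, and one verifies directly that $U \sigma_t = S_t U$, equivalently $\sigma_t = U^{-1} S_t U$.

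Next I would check that $U$ is unitary from $L^2(\RR)$ onto $L^2(\RR, w(y)\,dy)$ with the stated weight, rather than merely an algebraic intertwiner. This is where the weight $e^{-2(e^y-1)}$ is forced: for $U h = e^{e^y} h$ to be an isometry one needs $\int |e^{e^y} h(y)|^2 w(y)\,dy = \int |h(y)|^2\,dy$, i.e. $e^{2e^y} w(y) = 1$, giving $w(y) = e^{-2e^y}$. The normalization by $e^2$ (so that $w(y)=e^{-2(e^y-1)}$) is an immaterial constant that rescales the norm by a fixed factor and does not affect unitarity as a statement about the lattice of subspaces; I would note this and either absorb the constant into $U$ or remark that $e^{-2e^y}\,dy$ and $e^{-2(e^y-1)}\,dy$ give the same space up to scaling. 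Finally I would confirm that $\{S_t\}$ genuinely acts on $L^2(\RR, w\,dy)$, i.e. that right translation preserves this weighted space boundedly; since $w(y-t) = e^{-2(e^{y-t}-1)} \le e^{2} w(y)$ for $t \ge 0$ (because $e^{y-t} \ge e^y - (\text{something})$ — more precisely $e^{-2e^{y-t}} \le e^{-2e^y}$ fails, so one checks $\|S_t\|$ directly from the change of variables), the shift is bounded with the expected norm growth, consistent with the factor $e^{-t/2}$ visible in \eqref{eq:v2}.

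The main obstacle is purely bookkeeping: tracking the constant $e^{-t/2}$ from \eqref{eq:v2} against the constant in the weight normalization, and making sure the multiplication operator $U$ is densely defined and genuinely onto (not just isometric). Since $e^{e^y}$ is unbounded as $y\to+\infty$, $U$ is an unbounded multiplier on $L^2(\RR)$, so I must verify unitarity as a map \emph{into the weighted space}, where the weight exactly cancels the growth; there is no domain issue once the target Hilbert space is correctly specified, because the isometry identity holds on all of $L^2(\RR)$ by construction of $w$. I expect the verification of the intertwining identity $U\sigma_t U^{-1} = S_t$ to be a one-line substitution, and the only genuine content is recognizing that the weight is not a free choice but is dictated by the requirement that $U$ be unitary.
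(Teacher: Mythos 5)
Your proposal is correct and takes essentially the same route as the paper: the paper's unitary is $Wh(y)=h(y)/\sqrt{w(y)}=e^{e^y-1}h(y)$, which is precisely your multiplier $m(y)=e^{e^y}$ with the constant normalized so that the weight comes out exactly as $e^{-2(e^y-1)}$, and the intertwining is verified by the same one-line exponent cancellation. Your closing worry about boundedness of $S_t$ on the weighted space is moot --- it follows automatically from the unitary equivalence (and directly from $w(y+t)\le w(y)$ for $t\ge 0$, which makes $S_t$ a contraction there).
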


\begin{proof}
Let us denote the weight $w(y)=e^{-2(e^y-1)}$ for $y\in \RR$ and consider the unitary mapping $W:L^2(\RR) \to L^2(\RR, w(y) \, dy)$ given by
$$W h(y)=h (y)/\sqrt{w(y)}, \qquad (y\in \RR),$$
for $h\in  L^2(\RR)$. A computation shows that for any function $f\in L^2(\RR, w(y) \, dy)$ and $t>0$
\begin{align*}
W\sigma_t W^{-1} f(y) &= W \sigma_t \, f(y) e^{-(e^y-1)}\\
& = W e^{-(1-e^{-t})e^y} f(y-t) e^{-(e^{y-t}-1)}\\
& = e^{e^y-1}e^{-(1-e^{-t})e^y} f(y-t) e^{-(e^{y-t}-1)} \\
& = f(y-t),
\end{align*}
for $y\in \RR$. This yields the statement of the proposition.
\end{proof}

Figure \ref{fig:1} shows a plot of the weight function described in Proposition \ref{proposition right-shift}.

\begin{figure}
\includegraphics[height=8cm, width=11cm]{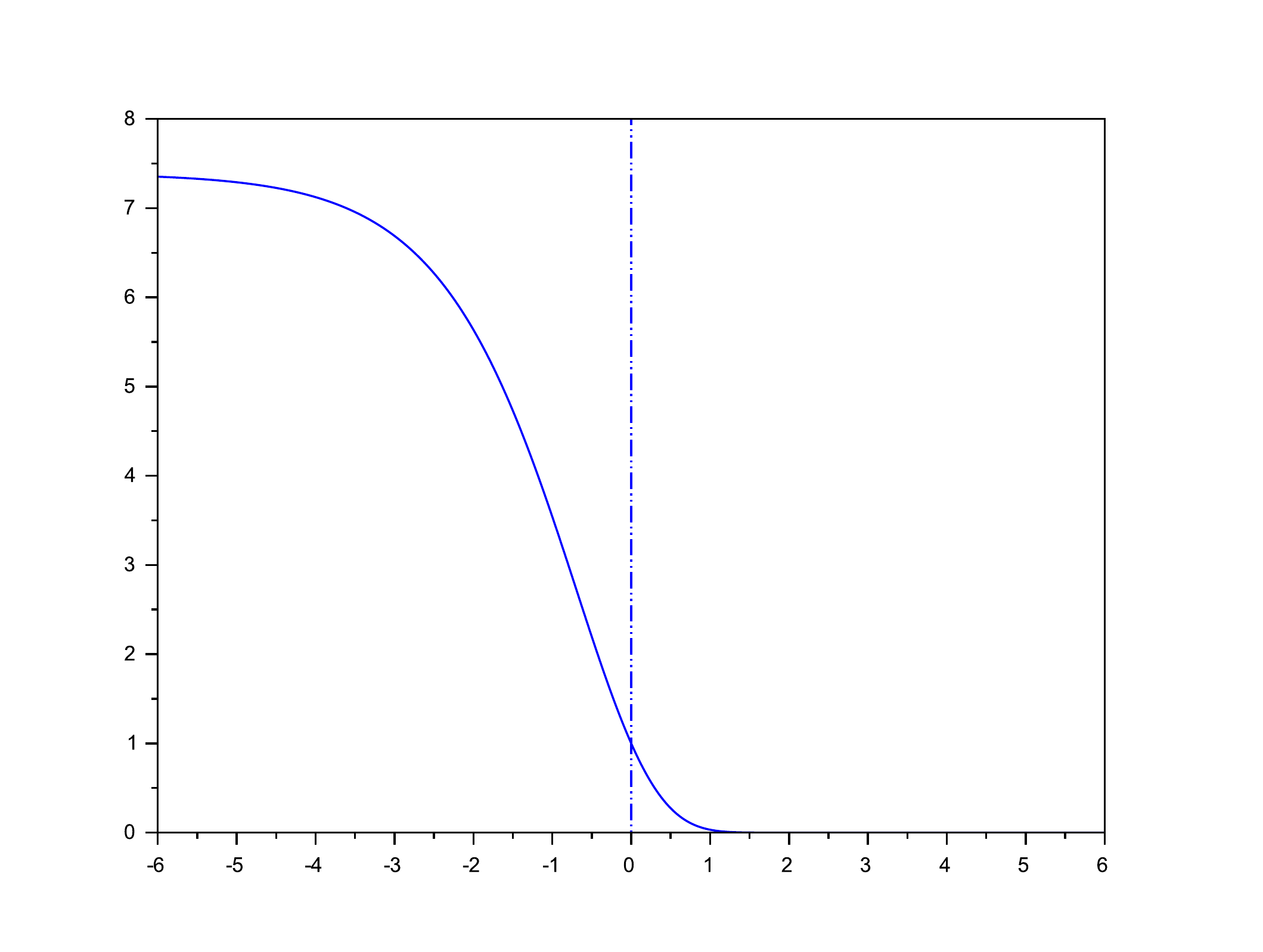}
\caption{The weight $e^{-2(e^y-1)}$ as a function of $y$.}
\label{fig:1}
\end{figure}

\medskip

As a by-product of equations \eqref{similarity half-plane}, \eqref{semigroup in half-line}, \eqref{eq:v2}, Proposition \ref{proposition right-shift} and Theorem \ref{thm semigroup}, if we denote by $\mathfrak{F}$ the unitary isomorphism $\mathfrak{F}=W T^{-1} \mathcal{L}^{-1} \mathcal{U}$ from $H^2(\DD)$ onto $L^2(\RR, e^{-2(e^y-1)} \, dy)$, the following result regarding the lattice of invariant subspaces of the Ces\`aro operator holds:

\begin{thm}\label{theorem semigroup R}
A closed subspace $M$ in $H^2(\DD)$ is invariant under the Ces\`aro operator if and only if $\mathfrak{F} M^\perp$ in $L^2(\RR, e^{-2(e^y-1)} \, dy)$ is invariant under the right-shift semigroup $\{S_t:\, t\geq 0\}$.
\end{thm}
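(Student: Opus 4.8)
The plan is to recognize Theorem~\ref{theorem semigroup R} as the concatenation of the chain of equivalences already assembled in this section, so that essentially no new computation is required; the one point meriting care is the treatment of the scalar prefactors $e^{\lambda t}$ that accumulate along the way. First I would record the elementary but decisive observation that the lattice of closed invariant subspaces of an operator semigroup is insensitive to nonzero scalar reparametrizations: if $\{T_t\}_{t\geq 0}$ is any semigroup and $c_t\neq 0$ are fixed scalars, then a closed subspace $N$ satisfies $T_t N\subseteq N$ for every $t$ if and only if $c_t T_t N\subseteq N$ for every $t$, since one may simply cancel $c_t$. Hence $\{T_t\}$ and $\{c_t T_t\}$ share the same invariant subspaces, and this is precisely what licenses discarding the factors $e^{\lambda t}$ flagged just after \eqref{similarity half-plane}.

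Next I would start from Theorem~\ref{thm semigroup}, which reduces invariance of $M$ under $\mathcal{C}$ to invariance of $M^\perp$ under the composition semigroup $\{C_{\varphi_t}\}_{t\geq 0}$, and then transport this invariance successively through the four maps composing $\mathfrak{F}=W T^{-1}\mathcal{L}^{-1}\mathcal{U}$. By \eqref{similarity half-plane} the isomorphism $\mathcal{U}$ conjugates $C_{\varphi_t}$ to $e^t C_{\phi_t}$ on $H^2(\CC_+)$; the inverse Laplace transform $\mathcal{L}^{-1}$ then realizes the half-plane composition semigroup on $L^2(\RR_+)$ as the family $V_t$ of \eqref{semigroup in half-line} (up to the scalar we are discarding); the unitary $T^{-1}$ sends $V_t$ to $e^{-t/2}\sigma_t$ by \eqref{eq:v2}, with $\sigma_t$ as in \eqref{sigma semigroup}; and finally Proposition~\ref{proposition right-shift} identifies $W\sigma_t W^{-1}$ with the right-shift $S_t$ on $L^2(\RR, e^{-2(e^y-1)}\,dy)$. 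Composing, a direct bookkeeping of the prefactors $e^t$, $e^{-t}$, $e^{-t/2}$ shows that $\mathfrak{F}$ intertwines $C_{\varphi_t}$ with $e^{t/2}S_t$, that is $\mathfrak{F}C_{\varphi_t}\mathfrak{F}^{-1}=e^{t/2}S_t$ for all $t\geq 0$.

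Because $\mathfrak{F}$ is an isomorphism onto $L^2(\RR, e^{-2(e^y-1)}\,dy)$, conjugation by $\mathfrak{F}$ is a lattice isomorphism between the closed invariant subspaces of $\{C_{\varphi_t}\}$ and those of $\{e^{t/2}S_t\}$; by the scalar-factor observation of the first paragraph, the latter coincide with the invariant subspaces of $\{S_t\}$. Chaining these equivalences yields that $M$ is invariant under $\mathcal{C}$ if and only if $\mathfrak{F}M^\perp$ is invariant under $\{S_t\}_{t\geq 0}$, as claimed. The only real obstacle here is bookkeeping rather than any genuine difficulty: one must confirm that each of $\mathcal{U}$, $\mathcal{L}^{-1}$, $T^{-1}$, $W$ is a bounded, boundedly invertible map intertwining the semigroups exactly as stated, and check that the accumulated prefactors collapse to the single harmless scalar $e^{t/2}$, which is in particular nonzero so that the reduction to $\{S_t\}$ goes through.
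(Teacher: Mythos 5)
Your proposal is correct and follows essentially the same route as the paper, which derives Theorem~\ref{theorem semigroup R} precisely as a by-product of Theorem~\ref{thm semigroup}, equations \eqref{similarity half-plane}, \eqref{semigroup in half-line}, \eqref{eq:v2} and Proposition~\ref{proposition right-shift}, discarding the scalar factors $e^{\lambda t}$ exactly as you do. Your explicit bookkeeping that $\mathfrak{F}C_{\varphi_t}\mathfrak{F}^{-1}=e^{t/2}S_t$ is a correct and slightly more detailed rendering of the argument the paper leaves implicit.
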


\medskip

Accordingly, characterizing the lattice of invariant subspaces of the Ces\`aro operator in the Hardy space reduces to characterize the lattice of the right-shift semigroup in
$L^2(\RR, e^{-2(e^y-1)} \, dy)$.

\medskip

Though the lattice of the invariant subspaces of the right-shift semigroup acting on weighted Lebesgue spaces is only characterized for a very restricted subclass of weights (for instance the Beurling--Lax Theorem provides a characterization in $L^2(\RR_+)$, where the weight is the characteristic function $\chi_{(0,+\infty)}$ of $(0,+\infty)$), the question if such a lattice contains non-standard invariant subspaces has been extensively studied (see \cite{Do1, Do2, Do3, Do4}, \cite{GPR17, GR17}, \cite{Lax} or \cite{Ni}, for instance).

\medskip

Recall that given $a\in \RR\cup \{-\infty\} \cup \{\infty\}$, the  \textit{``standard invariant subspaces''} of $\{S_t:\, t\geq 0\}$ are given by
$$
L^2((a,\infty),w(y)\, dy)=\{f\in L^2(\RR, w(y)\, dy): f(y)=0 \mbox{ for a.~e. } y\leq a\}.
$$

\medskip

In \cite[Equation (8)]{Do1}, Domar proved that if the weight satisfies
$$
\displaystyle {\underline{\lim}}_{y\to -\infty} \frac{\log w(y)}{y}> -\infty
$$
then the lattice of invariant subspaces of $\{S_t:\, t\geq 0\}$ in $L^2(\RR, w(y)\,dy)$ contains non-standard invariant subspaces.

\medskip

\medskip

\noindent \emph{A word about notation}: Domar denotes by $L^2(\RR, w(y)\,dy)$ the space of measurable functions $f$ in $\RR$ such that
$f \,w \in L^2(\RR)$. Note that this does not affect the previous equation since it is enough to consider the positive function $w^{1/2}$.

\medskip

In our case, $w(y)= e^{-2(e^y-1)}$ for $y\in \RR$ and consequently, $\{S_t:\, t\geq 0\}$ has non-standard invariant subspaces in $L^2(\RR, e^{-2(e^y-1)} \, dy)$. Indeed, it is possible to exhibit many non-standard invariant subspaces in this case. In order to show them recall that, by means of the unitary equivalence $\LL: L^2(\mathbb{R}_+, \sqrt{2\pi}\, dt )\to H^2 (\CC_+)$, the Beurling--Lax Theorem asserts that a closed subspace $\mathcal{M}$ of $L^2(\RR_+)$ is invariant under every truncated right-shift to $L^2(\RR_+)$
$$
S_{\RR_+, \, \tau}f(t)=\left \{ \begin{array}{ll} 0 & \mbox { if  }0\leq t\leq \tau, \\
f(t-\tau)& \mbox { if  } t>\tau; \end{array} \right .
$$
$\tau \geq 0$, if and only if there exists an inner function $\Theta\in H^{\infty}(\CC_+)$ such that
$\LL \mathcal{M}= \Theta H^2(\CC_+)$
(see \cite[Cor. 6.5.5(2), p.149]{nik1}, for instance). Here, recall that an inner function $\Theta$ is an analytic function in $\CC_+$ with $|\Theta(z)|\leq 1$ for $z\in\CC_+$, such that the non-tangential limits exist and are of modulus 1 almost everywhere on the imaginary axis.

\begin{ex}
Let $T \in \RR$ fixed and write
$$L^2(\RR,w(y) \, dy)=L^2((-\infty,T),w(y) \, dy) \oplus L^2((T,\infty),w(y) \, dy),$$
the orthogonal direct sum of closed subspaces. Note that

\begin{enumerate}
\item $\exp(-2(e^T-1)) \le w(y) \le e^2$ on $(-\infty,T)$, so $L^2((-\infty,T),w(y) \, dy)$ is naturally isomorphic to
$L^2(-\infty,T)$.
\item If $M$ is a closed subspace of $L^2((-\infty,T),w(y) \, dy)$ invariant under all truncated right-shifts on $L^2((-\infty,T),w(y) \, dy)$, i.e.,
$$
S_{(-\infty,T), \, \tau}f(y)=\left \{ \begin{array}{ll} 0 & \mbox { if  } y-\tau>T, \\
f(y-\tau)& \mbox { if  } y-\tau\leq T, \end{array} \right .
$$

for $\tau \geq 0$, then $M \oplus  L^2((T,\infty),w(y) \, dy)$ is a closed subspace of $ L^2(\RR,w(y) \, dy)$
invariant under all right shifts.
\end{enumerate}

Now, the Beurling--Lax theorem provides a large class of nonstandard invariant subspaces $M$: take the ``twisted'' Laplace transform
\beq\label{eq:twist}
\tilde\LL f(s) = \int_{-T}^\infty e^{-su} f(-u) \, du
\eeq
which gives an isomorphism from $L^2((-\infty,T),w(y) \, dy) $ onto $e^{sT}H^2(\CC_+)$. Then any subspace of the form $\widetilde \LL^{-1} e^{sT} K_\Theta$ is invariant under all truncated right shifts $S_{(-\infty,T), \, \tau}$ where $\Theta \in H^\infty(\CC_+)$ is inner and $K_\Theta = H^2(\CC_+) \ominus \Theta H^2(\CC_+)$ is the associated model space (these calculations are easiest to follow when $T=0$, and the general case is a shifted version.)

\medskip

As an explicit example, let $K_\Theta$ be spanned by the reproducing kernel $s \mapsto 1/(s+\overline \lambda)$
for  $\lambda \in \CC_+$, so that $M$ is the one-dimensional space of $L^2((-\infty,T),w(y)\, dy)$ spanned by $e^{\overline \lambda t}$; then $M \oplus  L^2((T,\infty),w(y) \, dy)$ is a non-standard invariant subspace for all right shifts.
\end{ex}



While a theorem of Aleman and Koreblum \cite{AK08} asserts that the analytic Volterra operator is unicellular in $H^p$-spaces, as consequence of the previous considerations we have a new deduction of the following known result:

\begin{cor}\label{no-nunicellular}
The Ces\`aro operator $\mathcal{C}$ is not a unicellular operator in $H^2(\DD)$.
\end{cor}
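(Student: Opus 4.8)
The plan is to prove that $\mathcal{C}$ is not unicellular by exhibiting two closed invariant subspaces that are not comparable under inclusion — this is precisely the negation of unicellularity (an operator is unicellular when its lattice of invariant subspaces is totally ordered). By Theorem \ref{theorem semigroup R}, invariant subspaces of $\mathcal{C}$ correspond bijectively (via $M \mapsto \mathfrak{F}M^\perp$, which reverses inclusions) to closed invariant subspaces of the right-shift semigroup $\{S_t\}_{t\geq 0}$ on $L^2(\RR, e^{-2(e^y-1)}\,dy)$. Since $\mathfrak{F}$ is a unitary isomorphism and orthogonal complementation is an order-reversing bijection, the lattice $\Lat(\mathcal{C})$ is anti-isomorphic to the lattice of right-shift-invariant subspaces of the weighted space. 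Hence it suffices to produce two \emph{incomparable} shift-invariant subspaces there.

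First I would invoke the examples already constructed in the preceding Example. The standard invariant subspaces $L^2((a,\infty), w(y)\,dy)$ form a totally ordered chain indexed by $a \in \RR \cup \{\pm\infty\}$, so these alone cannot witness non-unicellularity. The key is the \emph{non-standard} invariant subspaces exhibited just above: for a fixed $T$ and an inner $\Theta \in H^\infty(\CC_+)$, the subspace $\widetilde\LL^{-1}e^{sT}K_\Theta \oplus L^2((T,\infty), w(y)\,dy)$ is invariant under all right shifts but is not of standard form. Concretely, the explicit one-dimensional example $N_\lambda := \Span\{e^{\overline\lambda t}\} \oplus L^2((T,\infty), w(y)\,dy)$ (for $\lambda \in \CC_+$) is a non-standard invariant subspace. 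I would then produce a \emph{second} incomparable subspace and check directly that neither contains the other.

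The cleanest route is to compare such a non-standard subspace with a standard one that ``cuts across'' it. Take $N_\lambda$ as above with its finite-dimensional non-standard part supported (essentially) on $(-\infty, T)$, and compare it with a standard subspace $L^2((a,\infty), w(y)\,dy)$ for a suitable $a < T$. On one hand, $L^2((a,\infty), w(y)\,dy)$ contains all of $L^2((T,\infty), w(y)\,dy)$ but, since $a<T$, it also contains functions supported in $(a,T)$ that have no reason to lie in the one-dimensional non-standard part $\Span\{e^{\overline\lambda t}\}$; so $L^2((a,\infty),w(y)\,dy) \not\subseteq N_\lambda$. On the other hand, the function $e^{\overline\lambda t}$ (extended appropriately to the left of $T$) belongs to $N_\lambda$ but does not vanish a.e.\ on $(-\infty,a)$, so it is not a member of $L^2((a,\infty),w(y)\,dy)$; hence $N_\lambda \not\subseteq L^2((a,\infty),w(y)\,dy)$. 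The two subspaces are therefore incomparable, the shift-lattice is not a chain, and pulling back through the order-reversing correspondence of Theorem \ref{theorem semigroup R} shows $\Lat(\mathcal{C})$ is not totally ordered.

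The main obstacle, and the only point requiring genuine care, is verifying the non-containment assertions rigorously rather than pictorially: one must confirm that the non-standard part of $N_\lambda$ genuinely ``leaks'' to the left of $T$ (so that its members are not supported in $(T,\infty)$ and hence not trivially captured by standard subspaces), and conversely that a standard subspace $L^2((a,\infty),w(y)\,dy)$ with $a<T$ contains elements outside the span of $e^{\overline\lambda t}$. Both reduce to the observation that $e^{\overline\lambda t}$ is a fixed nonzero function whose restriction to any half-line $(-\infty,a)$ is nonzero, together with the elementary fact that $L^2((a,T), w(y)\,dy)$ is infinite-dimensional and thus not contained in a one-dimensional space. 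These are routine once the Beurling--Lax description from the Example is in hand, so the corollary follows with little additional work.
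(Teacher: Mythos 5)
Your proposal is correct and takes essentially the same route as the paper: the paper deduces the corollary directly from the preceding Example (non-standard shift-invariant subspaces of the form $M\oplus L^2((T,\infty),w(y)\,dy)$ obtained via Beurling--Lax) combined with the order-reversing correspondence of Theorem \ref{theorem semigroup R}, and your explicit incomparability check --- a one-dimensional non-standard piece on $(-\infty,T)$ against the infinite-dimensional $L^2((a,T),w(y)\,dy)$ inside a standard subspace with $a<T$ --- simply makes rigorous the step the paper leaves implicit in the phrase ``as a consequence of the previous considerations''. (The paper also records, as an aside, an independent deduction from the fact that the point spectrum of $I-\mathcal{C}^*$ is all of $\DD$, but your argument does not need it.)
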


\smallskip

In this regard, the feature that the Ces\`aro operator $\mathcal{C}$ is not a unicellular operator on $H^2(\DD)$ can be also deduced from \cite{BHS}, as the referee kindly pointed out to us. Indeed, it follows from the result that the point spectrum of $I-\mathcal{C}^*$ in $H^2(\DD)$ is $\DD$ along with the fact that any operator on a Hilbert space which has at least two eigenvalues cannot be unicellular. Likewise, in \cite[Corollary 6]{KT74}, the authors constructed two nonzero invariant subspaces of $\mathcal{C}$ whose intersection is zero space.

On the other hand, it is worth pointing out that the classifying the invariant subspaces turns out to be completely different if one considers other semigroups studied in the context of Ces\`aro-like operators, as  in the following remark:

\begin{rem}
In \cite{AS}, the authors considered the composition operator group on $H^2(\CC_+)$ corresponding to the flow on $\CC_+$ given by
$$\phi_t(s)=e^{-t}s,\qquad (s\in \CC_+)$$
$t \in \RR$, in a broader context of studying Ces\`aro-like operators.

Proceeding similarly as before, the transformed semigroup on $L^2(0,\infty)$ is given by
\[
\tilde{V}_{t}g(x)=e^{t}g(e^t x), \qquad (x >0, \ t \in \RR  \mbox{ and } g\in L^2(0,\infty)),
\]
which transferred to $L^2(\RR)$ is
\[
T^{-1}\tilde{V}_t T h(y) = e^{t/2}h(y+t), \qquad (y\in \RR, \ t \in \RR),
\]
for $h\in L^2(\RR)$.

The subspaces $M$ invariant under the group $(\tau_t)_{t \in \RR}=(T^{-1}\tilde{V}_tT )_{t \in \RR}$ were essentially classified by Lax
-- the factors $e^{t/2}$ are irrelevant -- and can be found, with a slightly different notation in \cite[Cor~6.5.4, p.~149]{nik1}.
There are two types:
\begin{enumerate}
\item $1$-invariant subspaces, i.e., $\tau_t M \subset M$ for all $t<0$ but not for all $t \in \RR$.
These have the form $M=\FF q H^2(\Pi^+)$, where $q$ is measurable with $|q|=1$ almost everywhere
and here $\Pi^+$ denotes the upper half-plane;
\item $2$-invariant subspaces, i.e., $\tau_t M \subset M$ for all $t \in \RR$. These have the form
$M=\FF \chi_E L^2(\RR)$ for some measurable subset $E \subset \RR$.
\end{enumerate}
Here $\FF$ denotes the Fourier transform but, alternatively, one can use the bilateral Laplace transform and express the subspaces in terms of $L^2(i\RR)$
and the space $H^2(\CC_+)$ of the right half-plane.

\medskip

\noindent Likewise, in this case the invariant subspaces of the form $\tilde \LL^{-1}K_\Theta \oplus L^2(\RR_+)$ are 1-invariant subspaces as described above,
corresponding in $L^2(i\RR)$ to $\overline\Theta K_\Theta \oplus H^2(\CC_+)= \overline\Theta H^2(\CC_+)$.

\end{rem}

Finally, as an application of Theorem \ref{theorem semigroup R}, we present a characterization of the finite codimensional invariant subspaces of the Ces\`aro operator $\mathcal{C}$ in $H^2(\DD)$. Of particular relevance will be a theorem of Domar \cite{Do2} which states that the lattice of the invariant subspaces of $\{S_{\tau}:\tau\geq 0\}$ consists of just the standard invariant subspaces in $L^2(\RR_+, w(x)\, dx)$
whenever:
\begin{enumerate}
\item $w$ is a positive continuous function in $\RR_+$ such that $\log w$ is concave in $[c,\infty)$ for some $c>0$.
\item $\displaystyle \lim_ {x\to\infty}\frac{-\log w(x)}{x}=\infty.$
\item $\displaystyle \lim_ {x\to\infty}\frac{\log|\log w(x)|-\log x}{\sqrt{\log x}}=\infty.$
\end{enumerate}

\medskip

\begin{thm}
A finite codimensional closed subspace $M$ in $H^2(\DD)$ is invariant under the Ces\`aro operator if and only if $\mathfrak{F} M^\perp$ in $L^2(\RR, e^{-2(e^y-1)} \, dy)$ is spanned by a finite subset of functions given by
$$
\bigcup_{\lambda \in \Lambda} \{y^k e^{\lambda y}:\; k = 0,1,2,\ldots, n_\lambda \}$$
where $\Lambda\subset \CC_+$ is a finite set and $n_{\lambda} \geq 0$ for each $\lambda \in \Lambda$.
\end{thm}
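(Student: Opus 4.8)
The plan is to combine Theorem~\ref{theorem semigroup R} with the classical description of the finite-dimensional subspaces that are invariant under differentiation. First I would note that $\mathfrak{F}$ is a unitary isomorphism, so $M$ is finite codimensional in $H^2(\DD)$ if and only if $M^\perp$ is finite-dimensional, which holds if and only if $N:=\mathfrak{F}M^\perp$ is a finite-dimensional subspace of $L^2(\RR, e^{-2(e^y-1)}\,dy)$. By Theorem~\ref{theorem semigroup R}, $M$ is invariant under $\mathcal{C}$ exactly when $N$ is invariant under the right-shift semigroup $\{S_t:\, t\geq 0\}$. Thus the whole statement reduces to identifying the finite-dimensional subspaces of $L^2(\RR, w(y)\,dy)$, with $w(y)=e^{-2(e^y-1)}$, that are invariant under $\{S_t\}$.

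Second, I would analyse such an $N$ through the generator. Since $N$ is finite-dimensional and invariant, $\{S_t|_N\}_{t\geq 0}$ is a norm-continuous matrix semigroup, so $S_t|_N=e^{tG_N}$ for a matrix $G_N$; consequently $N\subset\bigcap_{n\geq 0}\mathcal{D}(G^n)$ and $G|_N=G_N$, where $G$ is the generator of $\{S_t\}$. A direct computation of $\lim_{t\to 0^+}(S_tf-f)/t$ identifies $G=-d/dy$ on its natural weighted-Sobolev domain, and membership of $N$ in every $\mathcal{D}(G^n)$ forces each $f\in N$ to be smooth. Letting $q$ be the minimal polynomial of $G_N$, every $f\in N$ then satisfies the constant-coefficient linear ODE $q(-d/dy)f=0$, whose solution space is spanned by the exponential monomials $y^k e^{\lambda y}$, where $-\lambda$ runs over the eigenvalues of $G_N$ and the admissible powers $k$ are controlled by their multiplicities. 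Hence $N$ is spanned by functions of the form $y^k e^{\lambda y}$.

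Third, I would pin down the admissible exponents $\lambda$ by an integrability analysis of the weight. As $y\to+\infty$ the factor $e^{-2e^y}$ gives double-exponential decay, so $y^k e^{\lambda y}\in L^2$ near $+\infty$ for every $\lambda\in\CC$; as $y\to-\infty$ one has $w(y)\to e^2$, so the measure is comparable to Lebesgue measure there and $\int_{-\infty}^{0}|y|^{2k}e^{2(\re\lambda)y}\,dy<\infty$ if and only if $\re\lambda>0$. Therefore $y^k e^{\lambda y}\in L^2(\RR,w\,dy)$ precisely when $\lambda\in\CC_+$, which yields the constraint $\Lambda\subset\CC_+$ and explains why only right-half-plane exponents occur.

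Finally, for the converse I would verify directly that any span $\Span \bigl(\bigcup_{\lambda\in\Lambda}\{y^k e^{\lambda y}: k\le n_\lambda\}\bigr)$ with $\Lambda\subset\CC_+$ finite is finite-dimensional, lies in $L^2(\RR,w\,dy)$ by the previous paragraph, and is invariant under each $S_t$ because $S_t(y^k e^{\lambda y})=(y-t)^k e^{\lambda(y-t)}$ is a combination of $y^j e^{\lambda y}$ with $j\le k\le n_\lambda$; Theorem~\ref{theorem semigroup R} then returns a finite codimensional invariant subspace $M$ of $\mathcal{C}$. I expect the main obstacle to be the rigorous passage from semigroup-invariance to the ODE, namely justifying that $N\subset\bigcap_n\mathcal{D}(G^n)$ genuinely forces the smoothness and exponential-polynomial form through $G=-d/dy$, together with the clean reading of $\lambda\in\CC_+$ from the asymptotics of $w$; the Jordan/minimal-polynomial bookkeeping and the converse are routine.
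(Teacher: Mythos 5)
Your proposal is correct in outline, but it follows a genuinely different route from the paper. The paper never touches the generator of the shift semigroup: it projects $N=\mathfrak{F}M^\perp$ onto $L^2((-\infty,0),w(y)\,dy)\cong L^2(-\infty,0)$, invokes the Beurling--Lax theorem to identify the (automatically closed, because finite-dimensional) image $P_-N$ with a finite-dimensional model space $K_\Theta$ for a finite Blaschke product --- which is where the exponential monomials $y^ke^{\lambda y}$ and the constraint $\Lambda\subset\CC_+$ come from --- and then uses Domar's theorem twice (applied to $N$ and to $\Span\{N,N_1\}$, where $N_1$ is the span of the natural extensions) to conclude that each element of $N$ is the full-line extension of the corresponding element of $P_-N$. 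Your argument instead classifies the finite-dimensional invariant subspaces intrinsically, via $S_t|_N=e^{tG_N}$, the identification $G=-d/dy$, and the resulting constant-coefficient ODE, with the condition $\lambda\in\CC_+$ read off from the integrability of $y^ke^{\lambda y}$ against the weight at $-\infty$. This buys independence from both Beurling--Lax and Domar (your forward direction works verbatim on any weighted $L^2(\RR,w\,dy)$ on which the shift semigroup is $C_0$, with the weight entering only through the admissible exponents), at the cost of having to justify the regularity bootstrap $N\subset\bigcap_n\mathcal{D}(G^n)\Rightarrow N\subset C^\infty$ and the distributional-to-classical passage for the ODE; the paper's route avoids all analysis of the generator but is tied to the half-line structure of the weight.

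One step in your write-up deserves more than the label ``routine'': from ``$N$ is contained in the solution space of $q(-d/dy)f=0$'' it does not follow that $N$ is spanned by a set of the form $\bigcup_{\lambda}\{y^ke^{\lambda y}:k\le n_\lambda\}$ --- a generic subspace of that solution space (for example the line through $e^{y}+ye^{y}$) is not of this saturated form. You must use the invariance of $N$ a second time: decompose $N$ into generalized eigenspaces of $G_N$, note that the $\lambda$-component is a subspace of $\{p(y)e^{\lambda y}:\deg p\le m\}$ invariant under $d/dy$, and observe that if $p$ has maximal degree $n_\lambda$ in that component then $p,p',\ldots,p^{(n_\lambda)}$ already span all polynomials of degree at most $n_\lambda$, forcing the component to be exactly $\Span\{y^ke^{\lambda y}:k\le n_\lambda\}$. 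With that inserted, your argument is complete.
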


Before proceeding with the proof, let us introduce the notation
$$f_{\lambda,k}(y) = y^k e^{\lambda y}$$
for  $y\in \RR, \lambda \in \CC_+$
and $k=0, 1, 2, \dots$ Note that
$$(S_t f_{\lambda,k})(y) = f_{\lambda,k}(y-t)=\sum_{j=0}^k \binom{k}{j} (-t)^{k-j} e^{-\lambda t} y^j e^{\lambda y},$$
so
\begin{equation}\label{image f}
S_t f_{\lambda,k}= \sum_{j=0}^k \binom{k}{j} (-t)^{k-j}  e^{-\lambda t} f_{\lambda,j}
\end{equation}
for  any  $\lambda \in \CC_+$ and $k=0, 1, 2, \dots$

\begin{proof}
Suppose first that $M$ is a finite codimensional closed subspace in $H^2(\DD)$ invariant under $\mathcal{C}$. Theorem \ref{theorem semigroup R} yields that $N=\mathfrak{F} M^\perp$ is a finite dimensional subspace of $L^2(\RR, e^{-2(e^y-1)} \, dy)$ invariant under all right shifts. Thus, $P_- N$, the projection onto $L^2((-\infty,0), e^{-2(e^y-1)} \, dy)\cong L^2(-\infty,0)$, is a  finite dimensional subspace invariant under all the  truncated  right shifts.

Thus, by the Beurling--Lax Theorem, $P_- N$ corresponds to a model space and in particular is spanned by a finite set of functions of the form
$$y^k e^{\lambda y}, \mbox{ for } y\in (-\infty, 0)$$
where $k = 0,1,2,\ldots, n_{\lambda}$ for $\lambda\in \Lambda \subset \CC_+$. We now show that $N$ is spanned by what we shall call the ``natural extension'' to $\RR$ of such functions as elements of $L^2(\RR, e^{-2(e^y-1)}\, dy)$, namely,  $y^k e^{\lambda y} \mbox{ for } y\in \RR$.

Observe that since N is ﬁnite dimensional, by the Domar theorem aforementioned,
$$N\cap L^2((0,\infty), e^{-2(e^y-1)}  \, dy)=\{0\}.$$
Therefore, there exist $h_{\lambda, k} \in N$ such that $P_- f_{\lambda, k}= h_{\lambda, k}$ and $N$ is spanned by $h_{\lambda, k}$. Let $N_1$ be spanned by $f_{\lambda, k}$ with the same $k$ and $\lambda$. By \eqref{image f}, $N_1$ is invariant under all right shifts $S_t$. Then $\Span\{N, N_1\}$ is a finite dimensional invariant subspace of all right shifts $S_t$. Upon applying Domar's theorem again,
$$
\Span\{N, N_1\} \cap  L^2((0, \infty), e^{-2(e^y-1)}\, dy)=\{0\}.
$$
Since
$$h_{\lambda, k}- f_{\lambda, k} \in  \Span\{N, N_1\} \cap  L^2((0, \infty), e^{-2(e^y-1)}\, dy),$$
we conclude that $h_{\lambda, k}=f_{\lambda, k}$. Thus, $N$ is spanned by $f_{\lambda, k}$.




\medskip

For the converse, assume $N=\mathfrak{F} M^\perp$ in $L^2(\RR, e^{-2(e^y-1)} \, dy)$ is a finite dimensional subspace spanned  by a finite subset
of the form $\bigcup_{\lambda \in \Lambda} \{y^k e^{\lambda y}:\; k = 0,1,2,\ldots, n_\lambda \}$
where $\lambda \in \Lambda \subset \CC_+$ is finite. Equation \eqref{image f} yields that $N$ is invariant under all right shifts $S_t$. Now,
Theorem \ref{theorem semigroup R} and the fact that $\mathfrak{F}$ is an isomorphism yield that $M$ is a finite codimensional closed subspace invariant under $\mathcal{C}$, which completes the proof.
\end{proof}

\begin{rem}
Note that in the case that $N=\mathfrak{F} M^\perp$ is infinite-dimensional, the arguments above giving the structure of
$N$ in terms of the structure of $P_-N$ fail because we can no longer assume that $P_-N$ is closed. However, it is of interest to note that
the {\em closure} $\overline{P_- N}$ in $L^2(-\infty,0)$ has
the same property of invariance under all truncated right shifts, and corresponds to a model space.

For instance, if $B$ is a Blaschke product in $\CC_+$ with the set of zeros $\Lambda$ and multiplicities $n_{\lambda} + 1$ for $\lambda \in \Lambda$ with $n_{\lambda}\in \{0, 1, 2, \dots\}$ and we set
$$N_B =\overline{\Span\{f_{\lambda, k}:\, \lambda \in \Lambda, k = 0, 1, \dots, n_{\lambda}\}}^{L^2(\RR, e^{-2(e^y-1)} \, dy)},$$
clearly $N_B$ is invariant under all right shifts. Nevertheless,
$$\overline{\tilde\LL P_- N_B}= \overline{\Span\{\widetilde{\mathcal{L}}P_- f_{\lambda, k}: \lambda \in \Lambda, k=0, 1,\dots, n_{\lambda} \}}^{H^2(\CC_+)}=K_B,$$
where $\tilde\LL$ is defined in \eqref{eq:twist} with $T=0$. Consequently, $N_B\neq  L^2(\RR, e^{-2(e^y-1)} \, dy)$, and if $B_1\neq B_2$ are two Blaschke products, then $N_{B_1}\not = N_{B_2}$.
\end{rem}

\subsection{A final remark regarding the lattice of the invariant subspaces of $\mathcal{C}$}

As we have just noted, the approach addressed in the previous theorem fails if $P_- N$ is not closed. Indeed, the following example shows that $P_- N$ need not be closed even if $N$ is a closed shift-invariant subspace of $L^2(\RR,e^{-2(e^y-1)} \, dy)$ showing, somehow, the limits of such an approach.

\smallskip

Let $\lambda>0$ and denote by $e_\lambda$ the function $e_\lambda:y\in \RR \mapsto e^{\lambda y}$.
Since $1 < e^{-2(e^y-1)} < e^2$ for $y<0$, we have
%
\beq\label{eq:el1}
\|e_\lambda\|^2_{L^2((-\infty,0),e^{-2(e^y-1)} \, dy)}   \approx
\|e_\lambda\|^2_{L^2(-\infty,0)} \approx
1/\lambda.
\eeq
On the other hand,
\begin{eqnarray}
\|e_\lambda\|^2_{L^2(\RR,e^{-2(e^y-1)} \, dy)}
 & \ge &
\int_1^2 e^{2\lambda y} e^{-2(e^y-1)} \, dy
\ge
e^{2\lambda}e^{-2(e^2-1)}.
\label{eq:elambda}
\end{eqnarray}

Now take $N$ to be the closed linear span in $L^2(\RR,e^{-2(e^y-1)} \, dy)$ of $\{e_{n^2}: n \in \NN\}$, that is,
$$
N=\overline{\Span \{e_{n^2}: n \in \NN \}}^{L^2(\RR,e^{-2(e^y-1)} \, dy)}.
$$
Now the twisted Laplace transform given in \eqref{eq:twist}, with $T=0$, provides an isomorphism
from $ L^2((-\infty,0),e^{-2(e^y-1)} \, dy)$ onto $H^2(\CC_+)$ transforming  $e_\lambda$ to $1/(s+\lambda)$.
By an argument similar to that used in proving the classical M\" untz--Sz\'asz theorem
it follows that $\overline{P_-N}\subsetneq L^2((-\infty,0),e^{-2(e^y-1)} \, dy)$ since there are functions orthogonal
to each $1/(s+n^2)$, for example, $1/(s+2)$ times a Blaschke product with zeros at $\{n^2: n \in \NN\}$. \\

Now $(\ker P_- )\cap N$ is a closed shift-invariant
subspace of $L^2((0,\infty), e^{-2(e^y-1)} \, dy)$ and hence, by Domar's theorem, a standard subspace. It must be $\{0\}$ (again this follows
from a M\H untz--Sz\'asz argument)
so  the restriction $P_- : N \to L^2((-\infty,0),e^{-2(e^y-1)} \, dy)$ is injective.
Finally, by Banach's open mapping theorem the norm estimates in \eqref{eq:el1} and \eqref{eq:elambda} show that it cannot have a closed range.

\medskip

The following proposition characterizes when $P_-N$ is a closed subspace of $L^2((-\infty,0), e^{-2(e^y-1)} \, dy)$.

\medskip

\begin{prop} With the above notation,  $P_- N$ is closed if and only if the (not necessarily direct) sum of the
two closed subspaces $N$ and $L^2((0,\infty), e^{-2(e^y-1)} \, dy)$ is closed.
\end{prop}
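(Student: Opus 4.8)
The plan is to treat this as a purely Hilbert-space statement about the orthogonal projection $P_-$, making no use of the shift-invariance of $N$: the claim concerns only the two closed subspaces $N$ and $L^2((0,\infty),e^{-2(e^y-1)}\,dy)$. Write $H = L^2(\RR, e^{-2(e^y-1)}\,dy)$, and set $H_- = L^2((-\infty,0), e^{-2(e^y-1)}\,dy)$ and $H_+ = L^2((0,\infty), e^{-2(e^y-1)}\,dy)$, so that $H = H_-\oplus H_+$ is an orthogonal decomposition, $P_-$ is the orthogonal projection onto $H_-$, and $\ker P_- = H_+$. Putting $P_+ = I - P_-$, I would establish the two implications separately, each by an elementary argument.

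For the direction ``$N + H_+$ closed $\Rightarrow P_- N$ closed'', the key is the set identity
$$P_- N = (N + H_+) \cap H_-.$$
The inclusion $\subseteq$ holds because for $\nu \in N$ one has $P_- \nu = \nu - P_+ \nu \in N + H_+$, while also $P_-\nu \in H_-$; the inclusion $\supseteq$ holds because any $z \in (N+H_+)\cap H_-$ can be written $z = \nu + h$ with $\nu\in N$ and $h\in H_+ = \ker P_-$, whence $z = P_- z = P_- \nu \in P_- N$. Granting this identity, if $N + H_+$ is closed then $(N+H_+)\cap H_-$ is an intersection of two closed subspaces, hence closed, so $P_- N$ is closed.

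For the converse ``$P_- N$ closed $\Rightarrow N + H_+$ closed'', I would argue directly with sequences. Suppose $\nu_n + h_n \to g$ with $\nu_n\in N$ and $h_n\in H_+$. Applying the bounded operator $P_-$ and using $P_- h_n = 0$ gives $P_-\nu_n \to P_- g$; since $P_- N$ is closed, $P_- g = P_- \nu$ for some $\nu\in N$. Then $P_-(g-\nu) = 0$, that is, $g - \nu \in \ker P_- = H_+$, so $g = \nu + (g-\nu) \in N + H_+$. This shows $N + H_+$ is closed.

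I do not expect a genuine obstacle here. The statement is elementary once one recognises that it does not use the shift structure and reduces to bookkeeping with the orthogonal splitting $H = H_-\oplus H_+$, together with the facts $\ker P_- = H_+$ and that $P_-$ is bounded. The only step that actually invokes the hypothesis is the converse, where closedness of $P_- N$ is precisely what produces the element $\nu\in N$ with $P_-\nu = P_- g$. A more structural alternative would be to note that $P_-$ induces an isometric isomorphism $H/\ker P_- \to H_-$; under it $P_- N$ corresponds to the image $\pi(N)$ of $N$ in $H/H_+$, and $\pi(N)$ is closed if and only if its preimage $N + H_+$ is closed, by the defining property of the quotient topology. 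The direct arguments above are, however, shorter.
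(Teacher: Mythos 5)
Your proof is correct, and it is essentially the same elementary Hilbert-space argument as the paper's: both reduce the statement to bookkeeping with the orthogonal splitting $H_-\oplus H_+$ (the paper proves the forward direction by noting that $P_-$ restricts to a projection of the closed subspace $N+H_+$ onto $P_-N$, and the converse via the identity $N+H_+=P_-N\oplus H_+$, while you use the equivalent identity $P_-N=(N+H_+)\cap H_-$ and a sequence argument). No gaps.
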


\begin{proof}
It is well known that if $P:H \to K$ is a projection onto a subspace $K$ of a Hilbert space $H$ then $K$ is automatically
closed: since if $(x_n)$ is a sequence in $K$ tending to $x \in H$, we have $x_n=Px_n \to Px$, so $x=Px \in K$.

Now if $H:=N+L^2((0,\infty), e^{-2(e^y-1)} \, dy)$ is closed, the projection $P_-$ maps $H$ to itself and its image is $P_-N$.
Conversely, if $P_-N$ is closed then $N + L^2((0,\infty), e^{-2(e^y-1)} \, dy)=P_-N \oplus L^2((0,\infty), e^{-2(e^y-1)} \, dy)$, the orthogonal direct sum of
two closed subspaces, and is therefore closed.
\end{proof}

\section{Functions of the Ces\`aro operator}\label{section 3}

  Suppose that $(T(t))_{t \ge 0}$ is a $C_0$ semigroup with $\|T(t)\| \le e^{mt}$ for some $m<1$
  and infinitesimal generator $A$.

It is a standard fact that
  \[
  \int_0^\infty e^{-\lambda t} T(t) \, dt = (\lambda-A)^{-1}
  \]
  provided that $\re \lambda > m$.

  Now consider the operator
  \[
  B= \frac{1}{\sqrt\pi}\int_0^\infty \frac{e^{-\lambda t}}{\sqrt t} T(t) \, dt.
  \]
  Then
  \begin{align*}
  B^2  &= \frac{1}{\pi}\int_0^\infty \frac{e^{-\lambda t}}{\sqrt t} T(t) \, dt \int_0^\infty \frac{e^{-\lambda u}}{\sqrt u} T(u) \, du \\
  &= \frac{1}{\pi} \int_{w=0}^\infty e^{-\lambda w} T(w) \, dw\int_{t=0}^w \frac{1}{\sqrt t \sqrt{w-t}} \, dt
  \end{align*}
  with $w=t+u$. The second integral is $\pi$ (use the substitution $t=w \sin^2 \theta$)
  and so $B^2= (\lambda-A)^{-1}$. Similarly for $(-B)^2$, of course.

  This gives an alternative way of looking at a result in \cite{MPR} on the square roots of the Ces\`aro operator, by using the composition semigroup and the observations
  in Section \ref{section 2}. Not surprisingly, it is linked with the fact
  that the Laplace transform of $e^{at}$ is $1/(s-a)$ and the Laplace transform of $e^{at}/\sqrt t$ is
 $ \sqrt{\pi}/\sqrt{s-a}$. There is a more general functional calculus
 available here, but this calculation at least can be done directly.\\

 Indeed the Phillips functional calculus \cite[Rem. 3.3.3]{haase} allows us,
 given a bounded semigroup $(T(t))_{t \ge 0}$ of operators on a Banach space $X$,
  to associate
 an operator $f(A)$ to a function $f$ that is the Laplace transform of a Borel measure $\mu$
 on $[0,\infty)$
 of bounded variation, by the formula
 \[
 f(A)x = \int_{[0,\infty)} T(t)x \, d\mu(t) \qquad (x \in X).
 \]
Note that the convention in \cite{haase} is that the generator is $-A$, rather than $A$, and we have allowed
for that in the discussion below.
In particular,
  we have
\beq\label{eq:integral}
\int_0^\infty \frac{e^{-\lambda t}}{t^{1-\beta}} \, dt = \lambda^{-\beta}\Gamma(\beta)
\eeq
for $\re \lambda,\re \beta>0$, so that
  \[
  \int_0^\infty \frac{e^{-t}e^{at}}{t^{1-\beta}} \, dt = (1-a)^{-\beta}\Gamma(\beta)
  \]
  for $\re a<1$ and $\re \beta>0$, from which we obtain
  \beq\label{eq:keyfc}
  \int_0^\infty \frac{e^{-t}T(t)}{t^{1-\beta}} \, dt = (I-A)^{-\beta}\Gamma(\beta)
  \eeq
  for $\re \beta >0$. A similar formula holds on replacing $T(t)$ by $T(t)^*$ and $A$ by $A^*$.

 Recalling that $\mathcal{C}^*=(I-A)^{-1}$, we have the following:

  \begin{thm}
  For $\re \beta>0$
 let $M_\beta$ denote the matrix of $\mathcal{C}^\beta$ (as defined using \eqref{eq:keyfc})
  with respect to the standard orthonormal basis $(z^j)_{j=0}^\infty$.
Then
\beq\label{eq:matrix}
(M_\beta)_{i,j}=
\begin{cases}
0 &  \text{if } i<j ,\\
\displaystyle {i \choose j} \sum_{k=0}^{i-j}   (-1)^k {i-j \choose k} (j+k+1)^{-\beta} &  \text{if } i \ge j.
\end{cases}
\eeq
\end{thm}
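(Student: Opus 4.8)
The plan is to realize $\mathcal{C}^\beta$ through the adjoint form of \eqref{eq:keyfc} and then read off the matrix entries by pairing against the basis. Since $\mathcal{C}^*=(I-A)^{-1}$, taking adjoints gives $\mathcal{C}=(I-A^*)^{-1}$ and hence $\mathcal{C}^\beta=(I-A^*)^{-\beta}$; the adjoint version of \eqref{eq:keyfc} then yields
$$
\mathcal{C}^\beta = \frac{1}{\Gamma(\beta)}\int_0^\infty \frac{e^{-t}}{t^{1-\beta}}\, C_{\varphi_t}^*\, dt, \qquad (\re\beta>0),
$$
where $T(t)=C_{\varphi_t}$. Since $(M_\beta)_{i,j}=\langle \mathcal{C}^\beta z^j, z^i\rangle$, I would first justify moving the inner product inside the integral and then reduce the whole computation to the single scalar kernel $\langle C_{\varphi_t}^* z^j, z^i\rangle = \langle z^j, C_{\varphi_t} z^i\rangle$.

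The heart of the calculation is this scalar kernel. Writing $a=e^{-t}$ and expanding by the binomial theorem,
$$
C_{\varphi_t} z^i = (az + 1 - a)^i = \sum_{m=0}^i \binom{i}{m} a^m (1-a)^{i-m} z^m,
$$
so by orthonormality of $(z^m)$ one gets $\langle z^j, C_{\varphi_t} z^i\rangle = \binom{i}{j}\, e^{-jt}(1-e^{-t})^{i-j}$ when $i\ge j$ and $0$ when $i<j$; this already accounts for the lower-triangular shape of $M_\beta$. Substituting and expanding $(1-e^{-t})^{i-j}=\sum_{k=0}^{i-j}\binom{i-j}{k}(-1)^k e^{-kt}$, each resulting summand contributes $\int_0^\infty t^{\beta-1} e^{-(j+k+1)t}\,dt$, which by \eqref{eq:integral} with $\lambda=j+k+1$ equals $(j+k+1)^{-\beta}\Gamma(\beta)$. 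The factors of $\Gamma(\beta)$ cancel, leaving exactly \eqref{eq:matrix}.

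The main technical point to address is the legitimacy of interchanging the inner product (and the finite binomial sum) with the integral, i.e.\ the convergence of the operator-valued integral defining $\mathcal{C}^\beta$. Near $t=0$ the weight $t^{\beta-1}$ is integrable precisely because $\re\beta>0$, while for large $t$ the norm estimate \eqref{norm estimate}, $\|C_{\varphi_t}\|_2=e^{t/2}$, gives $e^{-t}\|C_{\varphi_t}^*\|=e^{-t/2}$, so the integral converges in operator norm as a Bochner integral; pairing with the fixed vectors $z^j,z^i$ and exchanging with the finite sum is then immediate. With that justification in place, the remaining steps are the routine binomial expansions and the evaluation of \eqref{eq:integral}, so I expect no serious obstacle beyond bookkeeping once convergence is settled.
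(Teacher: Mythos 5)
Your proposal is correct and follows essentially the same route as the paper: both reduce the matrix entry to the coefficient of $z^j$ in $C_{\varphi_t}z^i=(e^{-t}z+1-e^{-t})^i$, expand $(1-e^{-t})^{i-j}$ binomially, and evaluate term by term with \eqref{eq:integral}. The only difference is that you explicitly justify the convergence of the Bochner integral and the interchange with the inner product (via the norm estimate \eqref{norm estimate}), a point the paper leaves implicit.
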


\beginpf
The $(j,i)$ entry of the matrix for $(I-A)^{-\beta}$
 is the coefficient of $z^j$ in
\[
\frac{1}{\Gamma(\beta)} \int_0^\infty \frac{e^{-t}}{t^{1-\beta}}(e^{-t}z+(1-e^{-t}))^i \, dt,
\]
namely, $0$ for $j >i$ and otherwise
\[
\frac{1}{\Gamma(\beta)}  \int_0^\infty \frac{e^{-t}}{t^{1-\beta}} {i \choose j} e^{-jt}(1-e^{-t})^{i-j} \, dt
= \frac{1}{\Gamma(\beta)} {i \choose j} \int_0^\infty \frac{e^{(-1-j)t}}{t^{1-\beta}}\sum_{k=0}^{i-j} {i-j \choose k} (-1)^k e^{-kt} \, dt.
\]
 Now, using \eqref{eq:integral}    we obtain \eqref{eq:matrix}.
\endpf
In the case $\beta=1/2$ this agrees with the formula in \cite{MPR}.

\begin{rem}
It is clear from the functional
calculus that every subspace for   $\mathcal{C}$ is also an invariant subspace for $\mathcal{C}^{\beta}$ for $\re \beta>0$.
Since invariant subspaces for $\mathcal{C}^{1/n}$  are clearly invariant subspaces for $\mathcal{C}$ for $n=1,2,\ldots$,
we may conclude that  $\mathcal{C}$ and $\mathcal{C}^{1/n}$ have the same lattice of invariant subspaces.
\end{rem}

\section*{Acknowledgements}
The authors are grateful to a referee for carefully reading the manuscript and providing some
extremely helpful comments which improved its readability.

\end{document}